\documentclass[letterpaper, 10 pt, conference]{ieeeconf}  

\IEEEoverridecommandlockouts                              
\overrideIEEEmargins                                      


\usepackage{cite}
\usepackage{amsmath,amssymb,amsfonts}
\usepackage{algorithmic}
\usepackage{graphicx}
\usepackage{textcomp}
\usepackage{xcolor}
\usepackage{defs}
\usepackage{caption}
\usepackage{subcaption}
\usepackage{todonotes}
\usepackage{multicol}
\usepackage{tabularray}
\usepackage{mathtools}
\usepackage{todonotes}
\usepackage{float}
\usepackage{booktabs}
\usepackage{array, multirow}
\usepackage[numbered,framed]{matlab-prettifier}

\pdfminorversion=4
\usepackage{tikz}
\def\BibTeX{{\rm B\kern-.05em{\sc i\kern-.025em b}\kern-.08em
    T\kern-.1667em\lower.7ex\hbox{E}\kern-.125emX}}

\title{\LARGE \bf
A novel trajectory optimization algorithm for continuous-time model predictive control}

\author{Souvik Das$^{1}$, Siddhartha Ganguly$^{1}$, Muthyala Anjali$^{2}$, and Debasish Chatterjee$^{1}$
\thanks{*Souvik Das and Siddhartha Ganguly are supported by the PMRF grant, from the Ministry of Human Resource Development, Govt. of India. Debasish Chatterjee acknowledges partial support of the SERB MATRICS grant MTR/2022/000656.}
\thanks{$^{1}$The authors are with Systems and Control Engineering,
        Indian Institute of Technology Bombay, Powai, Maharashtra, India.
        Email: {\tt\small \{souvikd,sganguly,dchatter\}@iitb.ac.in.}}%
\thanks{$^{2}$The author is with Systems and Control Engineering,
        Indian Institute of Technology Bombay, Powai, Maharashtra, India.
        Email: {\tt\small anjali.m@sc.iitb.ac.in.}}%
}

\begin{document}

\maketitle
\thispagestyle{empty}
\pagestyle{empty}

\begin{abstract}
This article introduces a numerical algorithm that serves as a preliminary step toward solving continuous-time model predictive control (MPC) problems directly \emph{without} explicit time-discretization. The chief ingredients of the underlying optimal control problem (OCP) are a linear time-invariant system, quadratic instantaneous and terminal cost functions, and convex path constraints. The thrust of the method involves finitely parameterizing the admissible space of control trajectories and solving the OCP satisfying the given constraints at every time instant in a tractable manner without \emph{explicit} time-discretization. The ensuing OCP turns out to be a convex semi-infinite program (SIP), and some recently developed results are employed to obtain an optimal solution to this convex SIP. Numerical illustrations on some benchmark models are included to show the efficacy of the algorithm.
\end{abstract}

\begin{keywords}
	Model predictive control, numerical optimal control, convex semi-infinite programs
\end{keywords}

\section{Introduction}
\label{s:intro}

This article is concerned with an important component in the implementation of model predictive control (MPC) of continuous-time linear plants directly in continuous time. Of course, since digital implementation of controllers is normative today, there is an implicit discretization of time associated with the digital implementation, and this discretization due to digital implementation \emph{continues to stay} in our framework. Consequent to the preceding reconciliation, one issue must be immediately addressed in the context of a continuous-time implementation of MPC --- the time interval needed to compute solutions to the underlying finite-horizon constrained optimal control problem. This latency may be substantial due to the complexity of the optimal control problem, and is typically much longer than the interval of discretization associated with digital implementation; therefore, an \emph{online} implementation of this scheme may not be suitable.

To circumvent this difficulty, we establish a novel tractable numerical algorithm for numerically solving the finite-horizon constrained optimal control problem underlying an MPC strategy, \emph{directly in continuous-time}. This algorithm is fine-tuned to handle the most popular class of problem data with linear dynamics, quadratic cost functions, and affine constraints.

With this numerical algorithm at hand, our goal, to be realized over our subsequent works, is to leverage the \emph{explicit MPC} technology: the implicit feedback in the MPC scheme is computed at a set of points on the admissible set of states, and an interpolation mechanism (guaranteeing tight control of the ensuing error) constructs an \emph{explicit} feedback by means of \emph{offline} numerical routines along the lines of \cite{ref:GanCha-22}. The result is a \emph{feedback} that is `close' to the implicit feedback map in the continuous-time MPC, and online implementation merely consists of evaluating this feedback map at each instant of time. Moreover, the extent of `closeness' of the interpolated feedback map and the implicit feedback of the MPC scheme is a choice of the designer. In particular, as will be demonstrated in subsequent reports, the ability to pre-assign an error margin lets us embed this uncertainty directly into the optimal control problem underlying the MPC strategy; no issues related to loss of feasibility arise as a consequence.

\subsection*{Contributions}

 It is evident from the preceding discussion that numerically computing solutions to the finite-horizon constrained optimal control/trajectory optimization problem is the critical issue in the context of explicit continuous-time MPC. Indeed, since an explicit map of the underlying feedback is our eventual target (and its synthesis strategy will be reported elsewhere), this offline computation at specific points in the feasible set constitutes the key step at hand, and we shall execute this step in this article. Here are our key contributions:
\begin{enumerate}[leftmargin=*]
    \item\label{contbu1} We introduce a new algorithm to solve a class of finite horizon continuous-time optimal control problems that serve as a first step towards addressing a continuous-time MPC  without \emph{explicit} time discretization. The established algorithm instead treats the admissible space of control functions corresponding to the underlying continuous-time OCP to be finite-dimensional (see Definition \ref{defn:discrete_admcon} in \S\ref{sec:ps} and the subsequent discussions).

    \item\label{contbu3} In our approach the uncountable family of convex constraints in the continuous-time OCP is left intact, which makes the problem challenging but arguably more accurate in comparison to conventional methods such as \emph{direct multiple shooting}, \emph{direct collocation}, etc., that rely on time-discretization. We direct the readers to Remark \eqref{rem:Perspective} for a detailed discussion. 

	\item\label{contbu4} There have been previous efforts to address the finite-horizon optimal control problem in MPC directly in continuous-time; we draw attention to \cite{ref:PanRawMayMan-15, ref:cont_mpc_survey_magni} and the references therein, where a discretization scheme of the control input using piecewise linear functions is adopted \emph{on a finite time-grid} to ensure that the constraints are satisfied at the grid-points. In comparison, an internal feature of our algorithm intelligently selects a predefined number of time points so that the satisfaction of the constraints at those sample points \emph{implies} that the constraints hold for all time; certain convexity structures are central to this step that converts an infinitary condition into a finitary one. Moreover, the value function and the optimizers of the finite horizon optimal control problem (more precisely a reformulated version of the problem, see \eqref{e:fOCP}) is equal to the value function of the recast semi-infinite optimization problem; see \eqref{eq:g_func}, Theorem \ref{prop:prop}, and the discussion thereafter.
\end{enumerate}

\subsection*{Notation}
We let \(\N \Let \aset{1,2,\ldots}\) denote the set of positive integers, \(\Nz\Let \N \cup \aset[]{0}\). The vector space \(\Rbb^{d}\) is equipped with standard inner product, \(\inprod{x}{y}\Let \sum_{j=1}^d x_j y_j\) for every \(x,y \in \Rbb^{d}.\) We denote the indicator function over a set \(S\) by \(\indic{S}(\cdot)\). The space of continuous functions on a domain \(X\subset \Rbb^{\nu_1}\) taking values in \(Y\subset \Rbb^{\nu_2}\) is denoted by \(\mathcal{C}\bigl(X;Y\bigr)\). 
 By \(\lpL[\infty]\bigl(X;Y\bigr)\) we denote the space of essentially bounded function on \(X\), taking values in \(Y\) with the essential supremum norm.

\section{Problem Statement}
\label{sec:ps}
Let us consider a linear time-invariant controlled dynamical system, modeled by the ordinary differential equation
\begin{equation} \label{eq:sys}
\dot \st(t) = A\st(t) + B\cont(t)\,\,\text{for a.e.\, } t \in [\tinit, \tfin],
\end{equation}
where \(\horizon>0\) is a fixed time horizon, \(\st(t) \in \Rbb^d\) is the vector of states and \(\cont(t) \in \Rbb^m\) is the vector of control/actions at time $t$, and \(A \in \Rbb^{d \times d}\), \(B \in \Rbb^{d \times m}\) are the state and the actuation matrices, respectively. Assume that the initial state \(x(0)\Let \param \in \Rbb^d\) and that the nonempty final set \(\finset \subset \Rbb^d\) are given, i.e.,
\begin{equation} \label{e:boundary constraint}
\st(\tfin) \in \finset \subset \Rbb^d,
\end{equation}
and that the control trajectory satisfies \(\cont(\cdot)\in \mathcal{U}\), where
\begin{equation}
	\label{eq:control constraint}
	\hspace{-2mm}\admcon\Let \aset[\big]{u(\cdot) \in \lpL[\infty]([0,T];\admcont)\suchthat \cont(t) \in \admcont\,\,\text{for a.e}\,\,t \in \lcrc{0}{T}}.
\end{equation}
Here \(\admcont \Let \prod_{i=1}^m \admcont_i\), and the sets \(\admcont_i\), for each \(i=1,\ldots,m,\) are given nonempty, connected, and compact intervals with nonempty interiors. A control \(\cont(\cdot)\) is \textit{feasible} if it is measurable,\footnote{For us the word `measurability' always refers to Lebesgue measurability, and `a.e.'\ refers to almost everywhere relative to the Lebesgue measure.} satisfies the control constraint \eqref{eq:control constraint}, and the corresponding solution \(\st(\cdot)\) of \eqref{eq:sys} satisfies \eqref{e:boundary constraint}. We define the objective function by
\begin{equation}
    \label{eq:define_cost}
    V_{\horizon}(\param,\cont(\cdot))\Let \fcost\bigl(\st(\tfin)\bigr) + \int_{\tinit}^{\horizon} \rcost\bigl(\st(t), \cont(t)\bigr) \odif{t}.
\end{equation}
Over the feasible controls \(\cont(\cdot) \in \admcon\), the finite horizon continuous-time optimal control problem is typically posed:
\begin{equation}
	\label{e:OCP} \tag{\textsc{ocp}}
\begin{aligned}
& \inf_{\cont(\cdot)}	&& V_{\horizon}(\param,\cont(\cdot))\\
&  \sbjto		&&  \begin{cases}
\text{dynamics}\,\,\eqref{eq:sys}\,\text{and its associated data},\\
\st(\tinit)= \param,\,\, \st(\tfin) \in \finset \subset\Rbb^d,\\
\st(t) \in \admst \quad\text{for each}\,\, t\in [0,T], \\ 
\cont(\cdot) \in \admcon.
\end{cases}
\end{aligned}
\end{equation}

\subsection*{Tractability of \eqref{e:OCP}}

Notice that the optimal control problem \eqref{e:OCP} is an \emph{infinite dimensional} optimization problem due to the fact that \(\cont(\cdot) \in \admcon\) and \(\admcon\) is infinite-dimensional. In contrast, \eqref{e:OCP} admits a finite-dimensional avatar when minimized over admissible control trajectories that are \emph{finitely} parameterized (e.g., if \(\admcon\) is the linear span of certain `basis functions'), which leads to a narrower set of admissible controls compared to the general setting of \eqref{e:OCP}; see \cite{MADCMN-2023}. This motivates the following definition: 

\begin{defn}\label{defn:discrete_admcon}
Let \(N \in \N\) and \(\dict \Let (\psi_i)_{i \in \Nz} \subset  \mathcal{C}([0,\horizon];\Rbb)\) be a dictionary of bounded continuous functions. We define the discretized set of admissible controls \(\admcon_{\dict}  \subset \mathcal{C}([0,\horizon];\Rbb)\) by 
\begin{align*}
    \admcon_{\dict} \Let \linspan \aset[\big]{ \psi_i:[0,\horizon] \ra \Rbb \suchthat \,\text{for }i = 1, \ldots, N},
\end{align*}
which is the span of finitely many basis functions from the dictionary \(\dict\).
\end{defn}
\begin{assum}[Standing Assumption]
\label{assump:standing}
We stipulate that an \(N\mbox{-}\)tuple of linearly independent elements from dictionary \(\dict\) have been extracted and fixed.
\end{assum}
For each \(i \in \aset[]{1,\ldots,m}\), the \(i\)-th component \(t \mapsto \cont_i(t)\) of the control trajectory \(t \mapsto \cont(t)\) is permitted to be a linear combination of basis functions \(t \mapsto \psi_j(t)\) for \(j=1,\ldots,N\). Define \(\Reg(t) \Let \bigl(\psi_1(t)\; \psi_2(t)\;\ldots \;\psi_N(t) \bigr)\). Then we have the component-wise parameterization: 
\begin{align} 
    \label{e:pcontrol}
    [0,\horizon] \ni t \mapsto \cont^{\dict}_i(t)= \sum_{j=1}^{N}\alpha_{i,j} \psi_j(t) \teL \inprod{\Param_i}{\Reg(t)} 
\end{align}
for \(i=1,\ldots,m\), where \(\alpha_{i,j} \in \Rbb\) are the \emph{control coefficients} (to be determined). Consequently, one can write the control trajectory \(t \mapsto \cont^{\dict}(t)\) in the compact form
\begin{align}
    \label{e:cont_param}
    \cont^{\dict}(t) = \begin{pmatrix}
        \Param_{1,1} & \Param_{1,2} & \cdots & \Param_{1,N}\\
        \Param_{2,1} & \Param_{2,2} & \cdots & \Param_{2,N}\\
        \vdots & \vdots & \ddots & \vdots\\
        \Param_{m,1} & \Param_{m,2} & \cdots & \Param_{m,N}
    \end{pmatrix} \Reg(t) \teL \Param \Reg(t),
\end{align}
for each \(t \in \lcrc{0}{\horizon}\), where \(\Param \in \Rbb^{m \times N}\) is the coefficient matrix. Note that with the above parameterization, the control trajectory \(t \mapsto \cont^{\dict}(t) \in \admcont\) and \(\cont^{\dict}(\cdot) \in \admcon\) --- this is because each component \(t \mapsto \cont_i^{\dict}(t) \in \admcont_i\) and \(\cont^{\dict}_i(\cdot) \in \mathcal{C}([0,\horizon];\Rbb)\) where \(i=1,\dots,m\). The choice of the generating functions \((\psi_j)_{j=1}^N\) depends primarily on the type of applications one has in mind and is up to the designer.

The finite-dimensional optimal control problem over the set of feasible controls taking values from \(\admcon_{\dict}\), is given by:
\begin{equation}
	\label{e:fOCP} \tag{\textsc{focp}}
\begin{aligned}
& \min_{(\cont^{\dict}_i(\cdot))_{i=1}^m \subset \admcon_{\dict}}	&& V_T\bigl(\param,\cont^{\dict}(\cdot)\bigr) \\
&  \sbjto		&&  \begin{cases}
\text{dynamics}\,\,\eqref{eq:sys}\,\,\text{and its associated data,}\\
\st(\tinit)= \param,\,\, \st(\tfin) \in \finset \subset\Rbb^d,\\
\st(t) \in \admst \quad\text{for each}\,\, t\in [0,\horizon],\\
\cont^{\dict}(t) \in \admcont \quad\text{for each}\,\, t\in [0,\horizon], 
\end{cases}
\end{aligned}
\end{equation}
with the following data:
\begin{enumerate}[label=(\ref{e:fOCP}-\alph*), leftmargin=*, widest=b, align=left]
\item \label{OCPdata1} a quadratic \emph{instantaneous cost}
        \[
		     (\dummyx,\dummyu) \mapsto \rcost(\dummyx,\dummyu) \Let \inprod{\dummyx}{Q\dummyx}+ \inprod{\dummyu }{R\dummyu}\in \lcro{0}{+\infty}
		\]
		and a quadratic \emph{terminal cost}
		\[
			 \dummyx \mapsto \fcost(\dummyx)\Let \inprod{\dummyx}{P\dummyx}\in \lcro{0}{+\infty}
		\]
		with given positive semi-definite matrices \(Q = Q^{\top} \in \Rbb^{d \times d}\) and \(P=P^{\top} \in \Rbb^{d\times d}\), and a given positive definite matrix \(R = R^{\top} \in \Rbb^{m \times m}\).
  
\item\label{OCPdata2} The state constraint set \(\admst \subset \Rbb^d\) is a closed and convex set, and the terminal constraint set \(\finset \subset \admst \subset \Rbb^d\) is a compact and convex set. Each set \(\admst\) and \(\finset\) contains \(0 \in \Rbb^d\) in their respective interiors. 


\end{enumerate}    

We denote the value of the objective in \eqref{e:fOCP} at initial state \(\param\) and admissible control \(\cont^{\dict}(\cdot)\) by \(V_{\horizon}(\param,\cont^{\dict}(\cdot))\), and 
\begin{equation}\label{eq:value funcion}
 \valf(\param) = \text{the optimal value of \eqref{e:fOCP}}; 
\end{equation}
since the initial states enter \eqref{e:fOCP} parametrically, of course \(\valf:\fsblset \lra \lcro{0}{+\infty}\) depends on the parameter \(\param\) where \(\fsblset\) denotes the set of initial states for which the problem \eqref{e:fOCP} is feasible. 
\begin{assum}
    \label{assum:slater_cond}
    We stipulate that Slater's condition holds for \eqref{e:fOCP}.\footnote{Assumption \ref{assum:slater_cond} states that the \eqref{eq:g_func} is strictly feasible and is a technical requirement for the proof of Theorem \ref{prop:prop} to go through.}
\end{assum}
\begin{assum}\label{assmp:opt_traj_exst}We enforce the following assumptions:
\begin{itemize}[label = \(\circ\)]
    \item \label{it:as_it_1}In the context of the optimal control problem \eqref{e:fOCP} with its associated data \ref{OCPdata1}--\ref{OCPdata2}, we assume that \(\fsblset \neq \emptyset\), and that the problem \eqref{e:fOCP} admits an \emph{optimal state-action trajectory} \([0,\horizon] \ni t \mapsto \bigl(\st\opt(t;\param), \cont\opt(t;\param)\bigr)\);
    \item \label{it:as_it_2} the state-action trajectory \([0,\horizon] \ni t \mapsto \bigl(\st\opt(t;\param), \cont\opt(t;\param)\bigr)\) is a normal extremal in sense of \cite[Definition 2.1]{ref:vinter-galbraith-lipschitz}. 
\end{itemize}
\end{assum}
\begin{prop}\label{prop:opt_traj_exst}
Consider the optimal control problem \eqref{e:fOCP} with its associated data \ref{OCPdata1}--\ref{OCPdata2}. Let Assumption \eqref{assum:slater_cond} and Assumption \eqref{assmp:opt_traj_exst} hold, and let the constraint sets \(\admst, \finset\), and \(\admcont\) are polytopic. Then \([0,\horizon] \ni t \mapsto \cont\opt(t;\param)\) is Lipschitz continuous for every \(\param \in \admst_{\horizon}\).
\end{prop}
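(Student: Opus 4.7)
The plan is to establish Lipschitz continuity of $t \mapsto \cont\opt(t;\param)$ via the Pontryagin Maximum Principle (PMP) for state-constrained problems, exploiting the strict convexity coming from $R = R^\top \succ 0$ and the polytopic structure of $\admst$, $\finset$, and $\admcont$, in the spirit of \cite{ref:vinter-galbraith-lipschitz}. First, since the LTI dynamics with the parameterized control \eqref{e:pcontrol} yield a state trajectory $\st\opt(\cdot)$ that is absolutely continuous, and since Slater's condition (Assumption \ref{assum:slater_cond}) rules out abnormal multipliers, the data of \eqref{e:fOCP} fit the hypotheses of the state-constrained PMP. Invoking PMP under the normal-extremality assumption (Assumption \ref{assmp:opt_traj_exst}) produces an adjoint $\lambda(\cdot)$ of bounded variation on $[0,\horizon]$, together with nonnegative measures supported on the active-constraint sets of $\admst$ and $\finset$.

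The Hamiltonian $H(\dummyx,\dummyu,\lambda) \Let \inprod{\lambda}{A\dummyx + B\dummyu} - \inprod{\dummyx}{Q\dummyx} - \inprod{\dummyu}{R\dummyu}$ is strictly concave in $\dummyu$, so the maximization condition pins down $\cont\opt(t;\param)$ uniquely as the maximizer of $\dummyu \mapsto H(\st\opt(t), \dummyu, \lambda(t))$ over $\admcont$. Because $\admcont$ is a polytope, a standard parametric quadratic-program argument shows this maximizer is a piecewise-affine, hence Lipschitz, function of the linear term $B^\top \lambda(t)$. A Galbraith-Vinter-style regularity step, adapted to polytopic $\admst$ and $\finset$ together with Slater, then upgrades the multiplier measures to absolutely continuous with essentially bounded densities, whence $\lambda(\cdot)$ itself is Lipschitz on $[0,\horizon]$. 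Composing the two Lipschitz maps delivers the required continuity of $t \mapsto \cont\opt(t;\param)$.

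The principal obstacle I anticipate is reconciling the parametric restriction $\cont(\cdot) \in \admcon_{\dict}$ with the classical PMP framework: since the parameterization confines the control to a finite-dimensional subspace of $\mathcal{C}([0,\horizon];\Rbb^m)$, the PMP must either be applied to a relaxation in which $\cont(\cdot)$ ranges over the full $\admcon$ --- and the optimality of the parameterized solution then has to be checked separately --- or else rephrased as finite-dimensional KKT conditions in the coefficient matrix $\Param$. The linear independence of $(\psi_j)_{j=1}^N$ imposed by the Standing Assumption \ref{assump:standing} is what would allow one to translate between these two viewpoints; a shorter direct alternative, available when the chosen basis is itself Lipschitz, is to observe that $\cont\opt(t;\param) = \Param\opt(\param)\Reg(t)$ is then trivially Lipschitz in $t$, and PMP is used only to uniquely pin down $\Param\opt(\param)$.
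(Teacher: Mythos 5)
Your argument is essentially the paper's own proof, unpacked: the paper disposes of the proposition in one line by checking hypotheses (H1)--(H4) of \cite{ref:vinter-galbraith-lipschitz} and invoking their Theorem 3.1, which is precisely the state-constrained PMP-with-measure-multipliers regularity machinery (normal extremal, $R\succ 0$ giving strict concavity of the Hamiltonian in $\dummyu$, polytopic constraint sets) that you sketch, so there is no methodological difference. The obstacle you flag is nonetheless genuine and is passed over silently by the paper: the cited theorem concerns controls ranging over all measurable selections of $\admcont$, whereas in \eqref{e:fOCP} the control is confined to the finite-dimensional class $\admcon_{\dict}$, so the citation strictly applies to the unparameterized relaxation and not verbatim to the problem in the statement. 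Your ``shorter direct alternative'' --- that $\cont\opt(t;\param)=\Param^{\star}\Reg(t)$ is automatically Lipschitz in $t$ whenever the chosen basis functions are Lipschitz --- is in fact the cleanest way to make the claim airtight for the parameterized problem, and it also exposes that the conclusion is basis-dependent, since Definition \ref{defn:discrete_admcon} only requires the dictionary elements to be continuous; neither your measure-multiplier sketch nor the paper's citation resolves this mismatch, so if you keep the PMP route you must either add the Lipschitz-basis hypothesis or argue separately that the parameterized optimum inherits the regularity of the unparameterized one.
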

\begin{proof}
Notice that the assumptions \(\text{(H1)}\)--\(\text{(H4)}\) in \cite{ref:vinter-galbraith-lipschitz} are satisfied. The proof follows immediately by invoking \cite[Theorem 3.1]{ref:vinter-galbraith-lipschitz}.
\end{proof}


Despite its finite-dimensional nature, the optimal control problem \eqref{e:fOCP} in a computation-theoretic sense is NP-hard in general, and involves uncountably many set-membership constraints since the state and the action variable in \eqref{e:fOCP} are subjected to the constraints \emph{at each instant} \(t \in [0,\horizon]\).

\section{Solving \eqref{e:fOCP} via a convex SIP}
\label{sec:refSIP}

The finite-dimensional problem \eqref{e:fOCP} is a convex program and admits a tractable approximation algorithm up to arbitrary precision, which is the subject of \S\ref{sec:refSIP}. To this end, we reformulate \eqref{e:fOCP} in the language of a convex semi-infinite program. Recall that the discretized control trajectory is given by the expression \(t \mapsto \cont^{\dict}(t) = \Param\Reg(t) \in \admcont\) as given in \eqref{e:cont_param}. 
Let us define the set of admissible \(\Param\)'s by 
\begin{align*}\adparam \Let \aset[\big]{\Param \suchthat \st(t) \in \admst, \;\Param \Reg(t) \in \admact \; \text{for each }t \in [0,T]}.\end{align*}
The solution of \eqref{eq:sys} starting from an initial state \(\param\) is given by the variation of constants formula:
\begin{align}\label{eq:p_sol}
t \mapsto \st(t) &= e^{At}\param + \int_{0}^{t} e^{A(t-\tau)} B \cont^{\dict}(\tau) \odif{\tau} \nn \\&
= e^{At}\param +  \int_{0}^{t} e^{A(t-\tau)} B \Param \Reg(\tau) \odif{\tau}.
\end{align}
Let us fix the notation 
\begin{align}\label{dvar}
\dvar \Let mN.
\end{align}
Then \eqref{e:fOCP} can be recast as a convex semi\(\mbox{-}\)infinite program: 
\begin{align}
    \label{e:r_SIP}
    &\min_{\Param \in \adparam}&& V_{\horizon}\bigl(\param,\Param \Reg(\cdot)\bigr) \nn\\
    &  \sbjto	&&  \begin{cases}
    \st(\tinit)= \param,\,\, \st(\tfin) \in \finset \subset\Rbb^d,\\
    \st(t) \in \admst \quad\text{for each}\,\, t\in [0,\horizon],\\
    \Param \Reg(t) \in \admcont \quad\text{for each}\,\, t\in [0,\horizon],  
    \end{cases}
\end{align}
for each \(\param \in \fsblset\). Our main focus will now be on solving the problem \eqref{e:r_SIP}. To this end, define
the function \( [0,\horizon]^{\dvar} \ni (t_1,\ldots,t_{\dvar}) \teL \tseq \mapsto \gfunc(\tseq; \param) \in \Rbb\) for every \(\param \in \fsblset\), by
    \begin{align}\label{eq:g_func}
        &\gfunc(\tseq; \param) &&\Let \nn\\ &\min_{\Param \in \adparam}	&& V_{\horizon}\bigl(\param,\Param \Reg(\cdot)\bigr)\nn\\
        &  \sbjto		&&  \begin{cases}
        \st(\tinit)= \param,\,\, \st(\tfin) \in \finset \subset\Rbb^d,\\
        \st(t_i) \in \admst \quad\text{for all }\, i=1,\ldots,\dvar,\\
        \Param\Reg(t_i) \in \admcont \quad\text{for all }\, i=1,\ldots,\dvar.
        \end{cases}
\end{align}
 In the light of Assumption \ref{assmp:opt_traj_exst}, the `\(\min\)' in \eqref{e:r_SIP} and \eqref{eq:g_func} are well-defined, which leads to the following critical observation:
\begin{theorem}
    \label{prop:prop}
    Consider the continuous-time optimal control problem \eqref{e:fOCP} along with its associated data \ref{OCPdata1}-\ref{OCPdata2}. Consider also \eqref{e:r_SIP} and \eqref{eq:g_func} along with their associated data and notations. Suppose that Assumption \ref{assum:slater_cond} holds. If the finite sequence \(\tseq^{\ast} \Let (t^{\ast}_1,\ldots,t^{\ast}_{\dvar})\) is a solution of the maximization problem  
    \begin{align}\label{eq:sup_sip}
        \sup_{\tseq \in [0,T]^{\dvar}} \gfunc(\tseq;\param) \quad \text{for }\param \in \admst_{\horizon},
    \end{align}
    then \(\valf(\param) = \gfunc(\tseq^{\ast};\param),\) where \(\valf(\cdot)\) is defined in \eqref{eq:value funcion}.
\end{theorem}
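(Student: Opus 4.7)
My approach follows the standard path for convex semi-infinite programs: I combine an easy \emph{relaxation inequality} with a converse obtained by reducing a convex-duality certificate to finite support via Carath\'eodory's theorem.

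\emph{Step 1 (relaxation inequality).} For any \(\tseq = (t_1,\ldots,t_{\dvar}) \in [0,\horizon]^{\dvar}\), the program \eqref{eq:g_func} differs from \eqref{e:r_SIP} only in that the state and control inclusions are enforced at the \(\dvar\) sample times instead of at every \(t \in [0,\horizon]\); the objective and the terminal constraint are unchanged. Thus the feasible set of \eqref{e:r_SIP} is contained in that of \eqref{eq:g_func}, and so \(\gfunc(\tseq;\param) \le \valf(\param)\) for every \(\tseq\); taking the supremum yields \(\sup_{\tseq}\gfunc(\tseq;\param) \le \valf(\param)\).

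\emph{Step 2 (convex SIP duality at an optimizer).} I would then exploit that \eqref{e:r_SIP} is a \emph{convex} SIP in \(\Param \in \Rbb^{\dvar}\): by \eqref{eq:p_sol} the state trajectory is affine in \(\Param\), so the cost \(V_{\horizon}(\param,\Param\Reg(\cdot))\) is a convex quadratic in \(\Param\) (since \(Q,P \succeq 0\) and \(R \succ 0\)), and each constraint \(\st(t) \in \admst\), \(\Param\Reg(t) \in \admcont\), as well as the terminal inclusion, is a convex constraint parameterized by \(t\). With the optimizer \(\Param^{\ast}\) supplied by Assumption \ref{assmp:opt_traj_exst} and Slater's condition from Assumption \ref{assum:slater_cond}, strong duality together with the KKT theorem for convex SIP produce a nonnegative Radon measure \(\mu^{\ast}\) on \([0,\horizon]\), supported on the active-constraint set of \(\Param^{\ast}\), and a multiplier for the terminal constraint, jointly satisfying Lagrangian stationarity as an identity in \(\Rbb^{\dvar}\).

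\emph{Step 3 (Carath\'eodory reduction and conclusion).} The stationarity identity writes a vector of \(\Rbb^{\dvar}\) as a conic combination of active-constraint gradients integrated against \(\mu^{\ast}\). Carath\'eodory's theorem for conic hulls in \(\Rbb^{\dvar}\) replaces \(\mu^{\ast}\) by an atomic measure \(\hat\mu = \sum_{i=1}^{\dvar}\lambda_i \delta_{t_i^{\ast}}\), with \(\lambda_i \ge 0\) and each \(t_i^{\ast}\) in the support of \(\mu^{\ast}\), while preserving the same linear identity. The pair \((\Param^{\ast},\hat\mu)\) is then a KKT pair for the convex finite-dimensional program \eqref{eq:g_func} at \(\tseq^{\ast} \Let (t_1^{\ast},\ldots,t_{\dvar}^{\ast})\); convexity upgrades this certificate to primal optimality of \(\Param^{\ast}\), whence \(\gfunc(\tseq^{\ast};\param) = V_{\horizon}\bigl(\param,\Param^{\ast}\Reg(\cdot)\bigr) = \valf(\param)\). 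Combined with Step 1 this gives \(\sup_{\tseq}\gfunc(\tseq;\param) = \valf(\param) = \gfunc(\tseq^{\ast};\param)\), so any maximizer \(\tseq^{\ast}\) achieves the claimed equality.

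\emph{Main obstacle.} The delicate work lives in Steps 2--3: invoking a version of the SIP-KKT theorem that, under Slater alone, yields a genuine \emph{Radon} multiplier measure rather than a merely finitely additive one; cleanly handling the terminal inclusion \(\st(\horizon) \in \finset\) as a convex constraint distinct from the continuum-indexed inclusions; and making the Carath\'eodory count line up so that exactly \(\dvar = mN\) atoms suffice. Checking complementary slackness for the reduced pair \((\Param^{\ast},\hat\mu)\), and confirming that the atoms of \(\hat\mu\) remain inside \([0,\horizon]\) and on the active set, are the finishing technicalities.
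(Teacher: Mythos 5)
Your outline is sound and, as far as the logic of the statement goes, it works: the relaxation inequality gives \(\gfunc(\tseq;\param)\le \valf(\param)\) for every \(\tseq\), the duality-plus-Carath\'eodory argument produces one tuple \(\hat{\tseq}\in[0,\horizon]^{\dvar}\) with \(\gfunc(\hat{\tseq};\param)=\valf(\param)\) (padding with zero-weight atoms to reach length \(\dvar\) is harmless since the SIP-optimal \(\Param^{\ast}\) is feasible at every time instant), and the hypothesis that \(\tseq^{\ast}\) attains the supremum then forces \(\gfunc(\tseq^{\ast};\param)=\valf(\param)\). This is, however, a genuinely different route from the paper's. The paper does not carry out the Lagrangian/measure-theoretic reduction at all: its proof only verifies the structural hypotheses --- compactness and convexity of \(\adparam\), nonempty interior via Slater's condition (Assumption \ref{assum:slater_cond}), continuity and convexity of the cost in \(\Param\) --- and then outsources the heavy lifting to two citations: \cite[Theorem 4.1]{JMB:81} for the existence of a maximizing tuple \(\tseq^{\ast}\) of \eqref{eq:sup_sip}, and \cite[Theorem 1]{ref:DasAraCheCha-22} for the equality \(\valf(\param)=\gfunc(\tseq^{\ast};\param)\). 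In other words, the machinery you construct by hand in Steps 2--3 (Radon multiplier under Slater, reduction to at most \(\dvar=mN\) atoms on the active set, KKT for the sampled program) is precisely what is packaged inside those cited results. Your version buys self-containedness and makes the count \(\dvar=mN\) transparent; the paper's version buys brevity and, in addition, establishes attainment of the supremum in \eqref{eq:sup_sip}, which your argument does not (and need not, since the theorem hypothesizes \(\tseq^{\ast}\)). The obstacles you flag are real but standard and are exactly where the cited theorems do the work: in particular, the set inclusions \(\st(t)\in\admst\), \(\Param\Reg(t)\in\admcont\), \(\st(\horizon)\in\finset\) must be scalarized (e.g., via finitely many affine inequalities in the polytopic case, or gauge/support functions in general) over a compact index set before the Radon-measure KKT and the conic Carath\'eodory count in \(\Rbb^{\dvar}\) can be applied; with that done, your certificate-transfer to the sampled program and the complementary-slackness check go through as you describe.
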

\begin{rem}
    \label{rem:on_reformulation}
	Let us discuss the assertion of Theorem \ref{prop:prop}. The continuous-time optimal control problem \eqref{eq:g_func} has been formulated as a finite-dimensional optimization problem with a finite number of path constraints induced by the sequence \(\tseq\as\). The above proposition asserts that it is sufficient to consider only (but intelligently picked) \(\dvar\)-many discrete points \(\tseq\) at which the path constraints must be satisfied for the purpose of solving \eqref{e:r_SIP}. To recover such an \(\dvar\mbox{-}\)tuple of optimal point, the maximization problem \eqref{eq:sup_sip} in Theorem \ref{prop:prop} must be solved globally on \([0,\horizon]^{\dvar}.\)
\end{rem}
\begin{proof}[Proof of Theorem \ref{prop:prop}]
Observe that the set \(\adparam\) is a compact and convex subset of \(\Rbb^{m \times N}\), in view of the fact that the sets \(\admst\) and \(\admcont\) are compact and convex. This implies that feasible set
\begin{align*}
        \mathcal{S} \Let \aset[\big]{\Param \in  \adparam \suchthat \text{the constraints } \text{in \eqref{e:r_SIP} hold}}
    \end{align*}
corresponding to \eqref{e:r_SIP} is closed and convex. Define the map 
\(
\Rbb^{m \times N} \ni \Param \mapsto F(\Param) \Let \Param \Reg(t) \in \admcont.
\)
Note that \(F(\cdot)\) is continuous in \(\Param\) and consequently any open set around the origin in \(\admcont\) has a preimage in \(\adparam\) which is open. Similar argument follows for the constraint \(\st(t) \in \admst\), implying that \(\adparam\) has a nonempty interior in the light of Assumption \ref{assum:slater_cond}. Moreover, the cumulative cost function 
\[
\adparam \ni \Param \mapsto  \int_{0}^{\horizon} \bigl(\inprod{\st(t)}{Q \st(t)}   + \inprod{\Param\Reg(t)}{R \Param\Reg(t)} \bigr)\odif{t},
\]
is both continuous and convex in \(\Param\). The existence of an optimizer \(\tseq^{\ast} \Let (t^{\ast}_1,t^{\ast}_2,\ldots,t^{\ast}_{\dvar}) \in [0,\horizon]^{\dvar}\) solving \eqref{eq:sup_sip} then follows from Assumption \ref{assum:slater_cond} and \cite[Theorem 4.1]{JMB:81}. Invoking \cite[Theorem 1]{ref:DasAraCheCha-22}, we assert that \(\valf(\param) = \gfunc(\tseq^{\ast};\param)\) for each \(\param \in \fsblset\).\end{proof}
\begin{rem}
Note that the solution to the maximization problem in Theorem \ref{prop:prop} is equal to the solution to the convex SIP \eqref{e:r_SIP} by a technique established in \cite{ref:ParCha-23}.
\end{rem}

\subsection*{Algorithm to solve \eqref{e:fOCP}} 
Here, we present an algorithm to solve the continuous-time optimal control problem directly. Recall that  
the quantity 
\begin{align}\label{e:s}
    \Param \in \argmin_{\zeta \in \adparam}&\big\{ V_{\horizon}(\param;\zeta\Reg(\cdot))\,\big{|}  \,  \st(t_i) \in \admst,\, \zeta \Reg(t_i)   \in \admcont, \; \nn \\ & \hspace{10mm}\text{for all } i=1,\ldots,\dvar \big\} ,
\end{align}
where \(V_{\horizon}\bigl(\param;\Param \Reg(\cdot)\bigr)\) is defined in \eqref{eq:define_cost}.

       

\begin{algorithm}[!ht]
	\SetAlgoLined
  	\DontPrintSemicolon
	\SetKwInOut{ini}{Initialize}
    \SetKwInOut{giv}{Data}
    \giv{threshold number of iterations \(\tau\);}
    \ini{initialize constraint points: \(t_1^0, t_2^0, \dots, t_{\dvar}^0 \in \lcrc{0}{T}\); initial guess for maximum value \(\gfunc_{\max}\); initial guess for the initial solution \(\ol{\Param}\);}
	\BlankLine
	\While{$n \leqslant \tau$}{
		\emph{Sample} (via simulated annealing-based algorithm) constraint time indices: \(\tseq^n =(t_1^n, t_2^n, \dots, t_{\dvar}^n) \in \lcrc{0}{T}^{\dvar}\) \;
		\emph{Evaluate} \(\gfunc_n \Let \gfunc(\tseq^n;\param)\) as defined in \eqref{eq:g_func} \;
		\emph{Recover} the solution  \(\Param_n\) as given in \eqref{e:s} \;
		\uIf{$\gfunc_n \geqslant \gfunc_{\max}$}{Set $\gfunc_{\max} \gets \gfunc_n$\;
        Set $\ol{\Param} \leftarrow \Param_n$}
        Update $n \gets n+1$ \;
	}
\caption{Simulated annealing based algorithm for continuous-time optimal control problem}\label{alg:sabaro}
\end{algorithm}

\begin{rem}
    \label{rem:Perspective}
    Contemporary methods for direct trajectory optimization involve discretization of both time and the space of admissible control functions via employing some form of approximation at the level of control \cite{ref:diehl2011numerical, ref:SG:NR:DC:RB-22} or both states and control \cite{ref:garg-2010}. These methods are broadly classifiable into two categories --- \emph{direct multiple shooting} and \emph{direct collocation} --- that involve discretization of the time axis, and discretization of the dynamics by means of suitable numerical integrator schemes, adding finitely many equality constraints to the nonlinear program, one at each point of discretization. These schemes produce a finitely parametrized family of trajectories that are then subjected to finitely many set-membership constraints, one each at the points of discretization: the uncountable family of path constraints are relaxed to finitely many constraints on the discretized time grid. The resulting finite-dimensional nonlinear program is then solved. On the other hand, the uncountable family of constraints in \eqref{e:OCP} is left intact in our approach, which makes it far more challenging but consequently and arguably, more accurate compared to conventional direct methods.
\end{rem}

\section{Stability}
\label{sec:main_result}
Recall from \S\ref{s:intro} that we seek to leverage the explicit MPC technology along the lines of \cite{ref:GanCha-22}. To this end we include a technical result concerning the stability of the dynamical system \eqref{eq:sys} under the feedback policy \(\dummyx \mapsto u\opt^{\mathrm{o}}(\dummyx) \Let u\opt(0;\dummyx)\). 
\begin{assum}\label{assump:stability_assumptions}
We stipulate that \(\fcost(\cdot), \finset\), and \(\rcost(\cdot,\cdot)\) must satisfy following properties \cite{ref:MayFal-19}:  
\begin{enumerate}[label=(\ref{assump:stability_assumptions}-\alph*), leftmargin=*]
	\item\label{eq:stability_prop2} There exists a feedback given by \(\finset\ni \dummyx\mapsto g_F (\dummyx)  \in \admcont\), such that the terminal set \(\finset\) is positively invariant for the system \(\dot{\dummyx}=A\dummyx+B g_F(\dummyx)\).
        \item \label{eq:stability_prop3} 
         The following inequality holds for all \(\dummyx \in \finset\):  
\begin{equation}\label{e:CLF}
\hspace{-2mm}\inprod{\nabla_{\dummyx}\fcost(\dummyx)}{A\dummyx+B g_F(\dummyx)} \le -\rcost(\dummyx,g_F(\dummyx)).
\end{equation}\
     \end{enumerate}
    \end{assum}
Here is our key technical result:
\begin{theorem}\label{thrm:stability}
	Consider the constrained optimal control problem \eqref{e:fOCP} along with its data \ref{OCPdata1}--\ref{OCPdata2}, and suppose that Assumption \eqref{assump:stability_assumptions} holds. Let \(\fsblset \neq \emptyset\), let \( [0,T] \ni t \mapsto \bigl(\st\opt(t;\param),\cont\opt(t;\param)\bigr)\) be an optimal state-action trajectory, and define \( \fsblset \ni \dummyx \mapsto \cont\opt^{\mathrm{o}}(\dummyx) \Let \cont\opt(0;\dummyx)\). Let \(V_{\horizon}\as(\cdot)\) be differentiable, then we have the following inequality for all \(\dummyx \in \fsblset\):
\begin{align}\label{value_func_descend}
\inprod{\nabla_{\dummyx}V_T\as(\dummyx)}{A\dummyx+B\cont\opt^{\mathrm{o}}(\dummyx)} \le -\rcost(\dummyx,\cont\opt^{\mathrm{o}}(\dummyx)).
\end{align}
\end{theorem}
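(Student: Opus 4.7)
The plan is to establish \eqref{value_func_descend} by the standard Lyapunov-type construction in receding-horizon control (cf.\ \cite{ref:MayFal-19}): flow from \(\dummyx \in \fsblset\) along the closed-loop dynamics for an infinitesimal time \(\delta > 0\) to reach \(\dummyx(\delta) \Let \dummyx + \delta\bigl(A\dummyx + B\cont\opt^{\mathrm{o}}(\dummyx)\bigr) + o(\delta)\), construct a feasible suboptimal control for the OCP posed at \(\dummyx(\delta)\) whose cost upper-bounds \(V_T\as(\dummyx(\delta))\), and then pass to the limit \(\delta \downarrow 0\) using the assumed differentiability of \(V_T\as\).

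The suboptimal control \(\tilde u(\cdot)\) on \([0,T]\) that I would use is the classical time-shift-plus-terminal-feedback splice: set \(\tilde u(t) \Let \cont\opt(t+\delta;\dummyx)\) on \([0, T-\delta]\) and \(\tilde u(t) \Let g_F(\tilde x(t))\) on \([T-\delta, T]\), where \(\tilde x(\cdot)\) is the corresponding state trajectory issued from \(\dummyx(\delta)\). By item \ref{eq:stability_prop2} of Assumption \ref{assump:stability_assumptions}, the terminal segment keeps \(\tilde x(\cdot)\) inside \(\finset \subset \admst\), so \(\tilde u\) is feasible. A change of variables on \([0,T-\delta]\) identifies \(\int_0^{T-\delta}\rcost(\tilde x(t),\tilde u(t))\, dt\) with \(\int_\delta^T \rcost(\st\opt(s;\dummyx),\cont\opt(s;\dummyx))\, ds\); integrating the CLF inequality \eqref{e:CLF} along \(\tilde x\) over \([T-\delta,T]\) yields
\[
\fcost(\tilde x(T)) + \int_{T-\delta}^T \rcost(\tilde x(t), g_F(\tilde x(t)))\, dt \;\le\; \fcost(\tilde x(T-\delta)) \;=\; \fcost(\st\opt(T;\dummyx)).
\]
Combining these with the sub-optimality bound \(V_T\as(\dummyx(\delta)) \le V_T(\dummyx(\delta), \tilde u(\cdot))\) telescopes against the definition of \(V_T\as(\dummyx)\) to give
\[
V_T\as(\dummyx(\delta)) - V_T\as(\dummyx) \;\le\; -\int_0^{\delta} \rcost(\st\opt(s;\dummyx), \cont\opt(s;\dummyx))\, ds.
\]
Dividing by \(\delta\), using differentiability of \(V_T\as\) on the left and continuity of \(s \mapsto \rcost(\st\opt(s;\dummyx), \cont\opt(s;\dummyx))\) at \(s=0\) on the right (so that the averaged integral converges to \(\rcost(\dummyx, \cont\opt^{\mathrm{o}}(\dummyx))\); note that Proposition \ref{prop:opt_traj_exst} supplies the requisite regularity of \(\cont\opt\)), and letting \(\delta \downarrow 0\) would yield \eqref{value_func_descend}.

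The principal obstacle I anticipate is that the splice \(\tilde u(\cdot)\) is in general \emph{not} an element of the finitely parameterized class \(\admcon_{\dict}\) in which \eqref{e:fOCP} is posed: neither the shifted tail \(\Param\as \Reg(\cdot+\delta)\) nor the nonlinear composition \(g_F(\tilde x(\cdot))\) needs to lie in \(\linspan\aset{\psi_1,\ldots,\psi_N}\). Consequently, the key chain \(V_T\as(\dummyx(\delta)) \le V_T(\dummyx(\delta),\tilde u(\cdot))\) is not immediate for \eqref{e:fOCP}; the argument as stated delivers the descent for the un-parameterized OCP \eqref{e:OCP}, and transferring it to \eqref{e:fOCP} would require either a structural hypothesis on \(\dict\) (closure under small time-shifts together with containment of a terminal-feedback-compatible family) or a projection of the splice onto \(\admcon_{\dict}\) with careful book-keeping so that the perturbation error vanishes faster than \(\delta\) in the limit.
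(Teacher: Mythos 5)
Your construction is exactly the paper's own argument: the paper likewise splices the time-shifted optimal tail with the terminal feedback \(g_F\) on \(\lcrc{\horizon}{\horizon+\stepsz}\), invokes suboptimality of the spliced control to bound \(\valf(\st\opt(\stepsz;\dummyx))\), uses the CLF inequality \eqref{e:CLF} to absorb the terminal-cost difference and the stage cost on \(\lcrc{\horizon}{\horizon+\stepsz}\), and then divides by \(\stepsz\) and takes \(\limsup_{\stepsz\downarrow 0}\), so your proposal is correct and essentially identical. The obstacle you flag in your last paragraph --- that the spliced control need not lie in the finitely parameterized class \(\admcon_{\dict}\), so the suboptimality bound is not immediate for \eqref{e:fOCP} --- is a legitimate observation, but the paper's proof invokes the same suboptimality step without addressing it either, so on this point your write-up is, if anything, more candid than the published argument.
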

\begin{rem}
\label{rem:on_stability}
Theorem \eqref{thrm:stability} demonstrates that \emph{under the feedback policy} \( \fsblset \ni \dummyx \mapsto \cont\opt^{\mathrm{o}}(\dummyx) \Let \cont\opt(0;\dummyx)\) the closed-loop system \(\dot{\st}(t) = A\st(t)+ B \cont\opt^{\mathrm{o}}(\st)\) is stable via the descent-like property \eqref{value_func_descend} of the value function \(\valf(\cdot)\) along the closed-loop trajectories of the system \eqref{eq:sys}.
\end{rem}

\begin{rem}[On differentiability of the value function]
Notice that in Theorem \eqref{thrm:stability} we assumed that \(V_{\horizon}\as(\cdot)\) is differentiable. One instance of such cases arises when the constraint sets \(\admst, \finset\), and \(\admcont\) are polytopic in nature, which is quite common in MPC literature \cite{ref:BBM_book}.  Differentiability of \(V_{\horizon}\as(\cdot)\) then follows immediately from \cite[Theorem 7.3]{ref:param_opt_still}.
\end{rem}
\begin{proof}[Proof of Theorem \ref{thrm:stability}]
Let \(\finset \ni \dummyx \mapsto g_F(\dummyx) \in \admcont\) be a map for which \(\fcost(\cdot)\) satisfy condition \eqref{e:CLF} in Assumption \eqref{assump:stability_assumptions}. For each \(\param \in \fsblset\), consider the optimal state-action trajectory \([0,\horizon] \ni t \mapsto \bigl(\st\opt(t;\param),\cont\opt(t;\param)) \in \admst \times \admcont\). Fix \(\stepsz>0\). Let us recall the policy \(g_F(\cdot)\) from Assumption \ref{assump:stability_assumptions} and consider the closed-loop system under \(g_F(\cdot)\):
\begin{align}
   \dot{z}(t) = A {z}(t) + B g_F(z(t)) \quad \text{with  } z(\horizon) = \st\as(\horizon;\dummyx)
\end{align}
for \(t \in \lcrc{\horizon}{\horizon+h}\). We consider the concatenated trajectory \(\lcrc{\stepsz}{\horizon+\stepsz} \ni t \mapsto \bigl(\widehat{\st}(t),\widehat{\cont}(t)\bigr)\) defined by
\begin{align}\label{e:cont_trajectories} 
        &t \mapsto \widehat{\st}(t) \Let \st\opt(t;\param) \indic{\lcrc{\stepsz}{\horizon}}(t) + z(t;\st\opt(\horizon;\param)) \indic{\lorc{\horizon}{\horizon+\stepsz}}(t),\nn \\  
        &t \mapsto \widehat{\cont}(t) \Let \cont\opt(t;\param) \indic{\lcro{\stepsz}{\horizon}}(t) + g_F(t;\st\opt(\horizon;\param)) \indic{\lcrc{\horizon}{\horizon+\stepsz}}(t). \nn
\end{align}
From \eqref{eq:define_cost}, the cost function is written as
\begin{equation}\label{valf_ineq_1}
   \begin{aligned}
   &V_{\horizon} \bigl( \st\opt(h;\param), \widehat{\cont}(\cdot) \bigr) =
   \\& \int_{h}^{\horizon+h} \rcost \bigl(\widehat{\st}(s),\widehat{\cont}(s) \bigr) \,\odif{s} + \fcost( \widehat{\st}(\horizon+h)) \nn \\
   & = \fcost( \widehat{\st}(\horizon+h)) - \fcost(\st\opt(\horizon;\param))+ \valf(\param)\nn \\ & + \int_{\horizon}^{\horizon+h}\rcost(\widehat{\st}(s),\widehat{\cont}(s))\,\odif{s})  - \int_0^h \rcost (\st\opt(s;\param),\cont\opt(s;\param))\,\odif{s}. 
    \end{aligned}
\end{equation}
Notice that, by suboptimality of the trajectory \(t \mapsto \widehat{\cont}(t)\), the value function satisfies \(\valf(\st\opt(h;\param)) \leqslant V_{\horizon} \bigl( \st\opt(h;\param), \widehat{\cont}(\cdot) \bigr)\) and we write the above equality as 
\begin{align}\label{valf_ineq_2}
   &\valf(\st\opt(h;\param)) - \valf(\param)\nn\\
   & \le \fcost( \widehat{\st}(\horizon+h)) - \fcost(\st\opt(\horizon;\param)) \nn\\& \int_{\horizon}^{\horizon+h}\rcost(\widehat{\st}(s),\widehat{\cont}(s))\,\odif{s}) -\int_{0}^{h} \rcost (\st\opt(s;\param),\cont\opt(s;\param))\,\odif{s}.  
\end{align}
Dividing both sides of the above inequality by \(h\), taking \(\limsup_{h\downarrow 0}\) on both sides, and invoking inequality \eqref{e:CLF} in Assumption \eqref{assump:stability_assumptions} we get the desired result \eqref{value_func_descend} for each \(\dummyx \in \fsblset\). The proof is complete.
\end{proof}

\begin{rem}
    It is clear from \ref{eq:stability_prop2}-\ref{eq:stability_prop3} in Assumption \eqref{assump:stability_assumptions} and from the inequality \eqref{valf_ineq_2} that \(\widehat{u}(\cdot)\) is feasible for the OCP \eqref{e:fOCP} over the horizon \(\stepsz\) to \(T+\stepsz\), which asserts successive feasibility of OCP \eqref{e:fOCP}.
\end{rem}

\section{Discussion and numerical experiment}
\label{sec:num_exp}

In this section, with the aid of two numerical experiments, we demonstrate the ability of our algorithm to generate constrained state and control trajectories that satisfy the given constraints for \emph{all} time. We again emphasize the fact that unlike conventional methods we directly deal with uncountably many path constraints without any time discretization. The procedure and the tractable algorithm have been discussed in detail above; see \S\ref{sec:refSIP}. We also highlight the presence of a \emph{turnpike behavior} \cite{ref:LG_turnpike} in our second example in \S\ref{subsec:IP_main}. This phenomenon serves as a motivation to employ model predictive control strategies, which we will report in our subsequent articles.  We report that all numerical experiments are conducted with Python \(3.8.18\) (SciPy \(1.10.1\)) in a computer with \(11\)th Gen Intel(R) Core(TM) \(i7\mbox{-}11370\)H CPU clocked at \(3.30\)GHz with \(16\)GB RAM. We compared all the numerical results obtained by employing our method with \(\quito\) which is a direct multiple shooting technique \cite{ref:SG:NR:RA:MS:DC}.\footnote{On MATLAB-19, in a computer with \(8\)th Gen Intel(R) Core(TM) \(i5-8250\) CPU clocked at \(1.60\)GHz with \(8\)GB RAM.} Notice that the difference in computation time is natural due to the presence of a global optimization step which is interfaced off-the-shelf without any fine-tuning; that said we emphasize the fact that online optimization time statistics are irrelevant in the context of explicit MPC, which is our final goal. 

\subsection{The Bryson-Denham problem}
\label{subsec:BD_main}

We begin with the second-order state-constrained  benchmark Bryson-Denham system \cite{ref:Bry75}. The optimal control problem is given by:
\begin{equation}
\label{eq:Bryson_OCP}
\begin{aligned}
	& \min_{\cont(\cdot)}	&&  J\bigl(\cont(\cdot)\bigr) \Let \int_{0}^{1} \cont(t)^2 \odif{t} \\
&  \sbjto		&&  \begin{cases}
\dot{\st}_1(t)=\st_2(t), \,\dot{\st}_2(t)= \cont(t),\\
\st_1(0)=0 \,\,\text{and}\,\,\st_2(0)=1,\\
\st_1(1)=0 \,\,\text{and}\,\,\st_2(1)=-1,\\
\st_1(t)\le l \Let \frac{1}{9} \,\,\text{for all}\,\,t \in [0,1].
\end{cases}
\end{aligned}
\end{equation}
The analytical solution of the problem \eqref{eq:Bryson_OCP} can be found in \cite{ref:Bry75}. The problem is challenging from a numerical viewpoint because of the presence of the path constraints \(\st_1(t) \le 1/9\) for all time \(t \in [0,1]\). The implementation details are presented in \S\ref{subsubsec:BD_implementation}. 

\subsubsection{Numerical simulation}
\label{subsubsec:BD_implementation} 
We consider the sinusoidal basis functions for the discretized control trajectory in \eqref{e:pcontrol}:
\begin{equation}
\label{eq:sine_basis}
\begin{aligned}
    \psi^\mathbf{s}_i(t) \Let \begin{cases}
        1 & \text{if}\quad i=1,\\
        \sin(2\pi(i-1)t) & \text{if}\quad i \in \{2,\dots, \tfrac{N+1}{2}\},\\
        \cos\Big(2\pi \Big(i-\frac{N+1}{2}\Big)t\Big) & \text{if}\quad i \in \{\tfrac{N+3}{2},\dots,N\}.
    \end{cases}
\end{aligned}
\end{equation}
Substituting the expression \eqref{e:pcontrol} with \(\psi_i(\cdot)=\psi^{\mathbf{s}}_i(\cdot)\) in the cost function of \eqref{eq:Bryson_OCP} leads to:
\begin{equation}\label{e:qr cost func}
    J(\alpha) = \alpha\transpose P \alpha \quad\text{with}\,\,\alpha = \pmat{\alpha_1 & \alpha_2 & \cdots &\alpha_N}\transpose, \nn
\end{equation}
and the matrix \(P\) is defined by
\begin{align}\label{e:grammian}
	P & \Let  \pmat{
		\langle \psi_1, \psi_1 \rangle & \langle \psi_2, \psi_1 \rangle & \cdots & \langle \psi_N, \psi_1 \rangle\\
		\vdots & \vdots & \ddots & \vdots\\
		\langle \psi_1, \psi_N \rangle & \langle \psi_2, \psi_N \rangle & \cdots & \langle \psi_N, \psi_N \rangle
        }, \nn 
\end{align}
where \(\inprod{\psi_i(\cdot)}{\psi_j(\cdot)}\) is the standard \(\horizon\)-horizon \(\lpL[2]\)-inner product. Now we solve the global maximization problem:
    \begin{align}
        \sup_{\tseq \in [0,\horizon]^{\dvar}} \gfunc(\tseq;\param) \;\; \text{where }x \Let \bigl(
        \st_1(0)\,\,\st_2(0)\bigr),\nn
    \end{align}
and the objective \(\gfunc(\cdot;x)\) for each \(x\in \fsblset\) is defined in the following fashion:

\begin{equation}
\begin{aligned}
    &\gfunc(\tseq;\param) \Let \nn \\ 
    & \min_{\alpha}	&& \hspace{-5mm}\alpha\transpose P \alpha  \\
    & \sbjto		&& \hspace{-5mm} \begin{cases}
    \Param \in \adparam,\, x_1(0) = 0 \text{ and }x_2(0)=1,\\
    \st_1(1)=0 \,\,\text{and}\,\,\st_2(1)=-1,\\
    \st_1(t_i)\le l \Let \frac{1}{9}\; \text{for } t_1,\ldots,t_{\dvar} \in [0,\horizon].
    \end{cases}
\end{aligned}
\end{equation}
The finite-dimensional inner minimization problem is solved using the `SLSQP' solver in Python (see \cite[Chapter 18]{ref:JNSJ-06} for the algorithm) and the outer maximization problem is solved using simulated annealing \cite[Chapter 13]{ref:OH-02} with different number of basis functions for control trajectory parameterization. The state and the control trajectories are depicted in Fig. \ref{fig:BD_states_51_basis} and Fig. \ref{fig:BD_control}, it can be seen that the path constraint on \(\st_1(\cdot)\) is satisfied for all \(t \in [0,1]\) with \(51\) basis functions for control parameterization. The evolution of the cost function with an increasing number of iterations is shown in Fig. \ref{fig:BD_cost}. The analytical value of the cost function for this OCP is \(8\), and it can be clearly seen that with the established method while the cost saturates to a sub-optimal value with \(11\) basis functions, it quickly converges to the optimal value with \(31\) and \(51\) basis functions. The CPU time with \(51\) basis functions is \(22.07964\) sec for \(550\) iterations of the global optimization; see \ref{tab:opt_data_BD} for more details.
\begin{figure}[!ht]
  \centering
  \begin{subfigure}[b]{0.49\linewidth}
    \includegraphics[width=\linewidth]{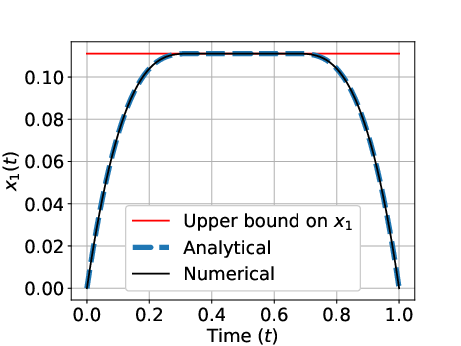}
  \end{subfigure}
  \begin{subfigure}[b]{0.49\linewidth}
    \includegraphics[width=\linewidth]{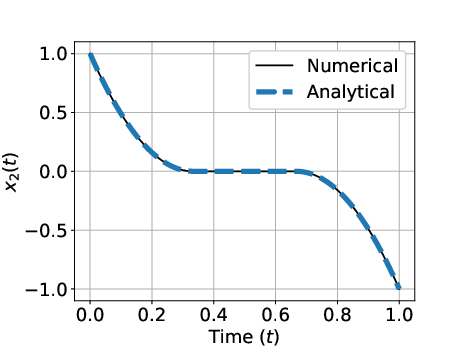}
  \end{subfigure}
 \caption{Time evolution of the state \(\st_1(\cdot)\) and \(\st_2(\cdot)\) with \(51\) basis functions employed in \eqref{e:pcontrol}. It can be seen that the path constraint on the state \(\st_1(\cdot)\) is respected. }
 \label{fig:BD_states_51_basis}
\end{figure}

\begin{figure}[tbp]
\centerline{\includegraphics[scale=0.4]{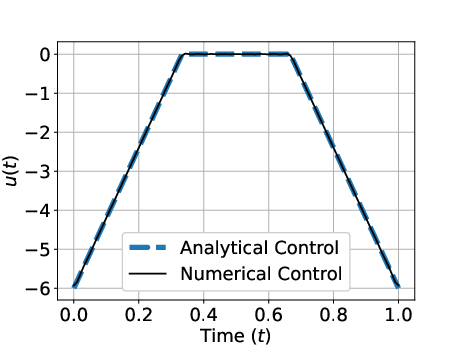}}
\caption{Time evolution of the control \(\cont(\cdot)\) with \(51\) basis functions employed in \eqref{e:pcontrol}. The analytical solution is also plotted in the same time horizon; the closeness of the analytical and the solution obtained from our algorithm is noteworthy.}
\label{fig:BD_control}
\end{figure}

\begin{figure}
    \includegraphics[draft=false, width=\linewidth]{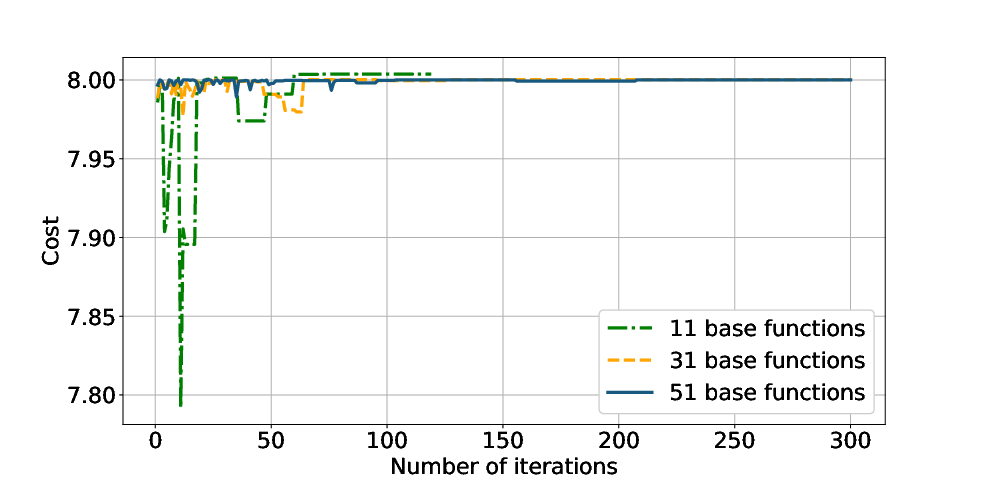} 
    \caption{Convergence of the cost value using \(11\), \(31\), and \(51\) basis functions with simulated annealing.}
    \label{fig:BD_cost}
\end{figure}

\begin{table}[htbp]
\caption{A comparision of optimization statistics}
\begin{tblr}{l|c|c}
	\hline[2pt]
		\SetRow{azure9}\SetCell[c=4]{c} Solver specifications and computation time
& 3-2
& 3-3
& 3-4
\\
\hline[1pt]
Method & Solver specs & Computation time (secs) \\ 
 \hline
Ours & Dual annealing  & 22.07964 \\ \hline
\(\quito\) & quadprog & 4 (with Gaussian kernel, step-size \\ & &  \(h = 0.02\), \(\mathcal{D}=2\))\\
\hline[2pt]
\end{tblr}
\centering
\label{tab:opt_data_BD}
\end{table}

\subsection{Stabilization of an inverted pendulum on a cart}
\label{subsec:IP_main}

In our second numerical experiment, we consider an inverted pendulum on a movable cart with a restricted cart length as shown in Fig. \ref{fig:inverted pendulum}. This is a benchmark under-actuated system with the horizontal force acting on the cart as the only control variable; see \cite[\S 4.3]{ref:DCAPHKJ-02} for more details. The inverted pendulum is in an upright position pivoted to the cart having an unstable equilibrium point. Our objective is to stabilize the inverted pendulum on the cart for a given time \(\horizon > 0\). The cart-pendulum system is a nonlinear time-invariant fourth-order dynamical system and we consider a linear approximation around the vertical position of the inverted pendulum.

\subsection*{Linear system formulation:}\label{subsubsec:IP_implementation}
\noindent We linearize the dynamics of the inverted pendulum system around the origin with the states \(\st_1(t) \Let \st(t)\) as the position of the cart, \(\st_2(t) \Let \theta(t)\) as the angle made by the pendulum with the vertical, \(\st_3(t) \Let \odv{\st_1}{t}(t)\) as the velocity of the cart, \(\st_4(t) \Let \odv{\st_2}{t}(t)\) as the angular velocity of the pendulum and the  control input \(t \mapsto \cont(t)\) as the horizontal force acting on the cart. 
The system matrix \(A\) and the input matrix \(B\) are defined by
\begin{equation*}
A \Let \begin{bmatrix}
	0 & 0 & 1 & 0 \\
	0 & 0 & 0 & 1 \\
	0 & \frac{-m_0^2gl^2}{(J_0+m_0l^2)p} & 0 & 0 \\
	0 & \frac{m_0gl(m_0+M)}{(J_0+m_0l^2)p} & 0 & 0	
    \end{bmatrix},\,
B \Let  \begin{bmatrix}
	0 \\
	0 \\
	\frac{1}{p} \\
	\frac{-m_0l}{(J_0+m_0l^2)p}
    \end{bmatrix},
\end{equation*}
where \(p \Let m_0+M-(m_0^2l^2/J_0+m_0l^2)\).
Here, \(m_0\) is the mass of the pendulum, \(M\) is the mass of the cart, \(2l\) is the length of the pendulum,  \(L\) is the length of the cart track available on both sides of the origin, and \(J_0\) is the moment of inertia of the pendulum (see \cite{ref:DCAPHKJ-02} for more details on the model).
Let us consider the quadratic cost  
\begin{equation}
J\bigl(u(\cdot)\bigr) \Let \int_0^T u(t)^2 \odif{t}, \nn
\end{equation}
with horizon \(\horizon = 10\) secs. From the above data, via the parameterized control action in \eqref{e:pcontrol} we can formulate a convex SIP as
\begin{equation}
	\label{e:pendulum_SIP}
    \begin{aligned}
		& \min_{\alpha \in \Rbb^{m \times N}} &&\alpha\transpose P \alpha\\
		& \sbjto &&
		\begin{cases}
			\alpha \in \adparam, & \\
			\abs{\st(t_i)} \leq \varepsilon \,\,\text{for } t_1,\ldots,t_{\dvar} \in [0,\horizon],\\ \text{where} \,\,\varepsilon \Let (\varepsilon_1 \,\, \varepsilon_2 \,\, \varepsilon_3\,\,\varepsilon_4)\transpose	.
		\end{cases}
    \end{aligned}
\end{equation}
The corresponding maxmin problem is given by:
\[
\max_{\tseq \in \lcrc{0}{\horizon}^{\dvar}} \gfunc(\tseq;\param),
\]
where \(\param \Let \st(0)\) and the objective function \(\gfunc(\cdot;\param)\) for each \(\param \in \fsblset\), is defined in \eqref{e:pendulum_SIP}.
Here, the inner minimization problem is constrained on a finite set of time instants \((t_1,\ldots,t_{\dvar})\).

\begin{figure}
\centering
\includegraphics[width=0.5\linewidth]{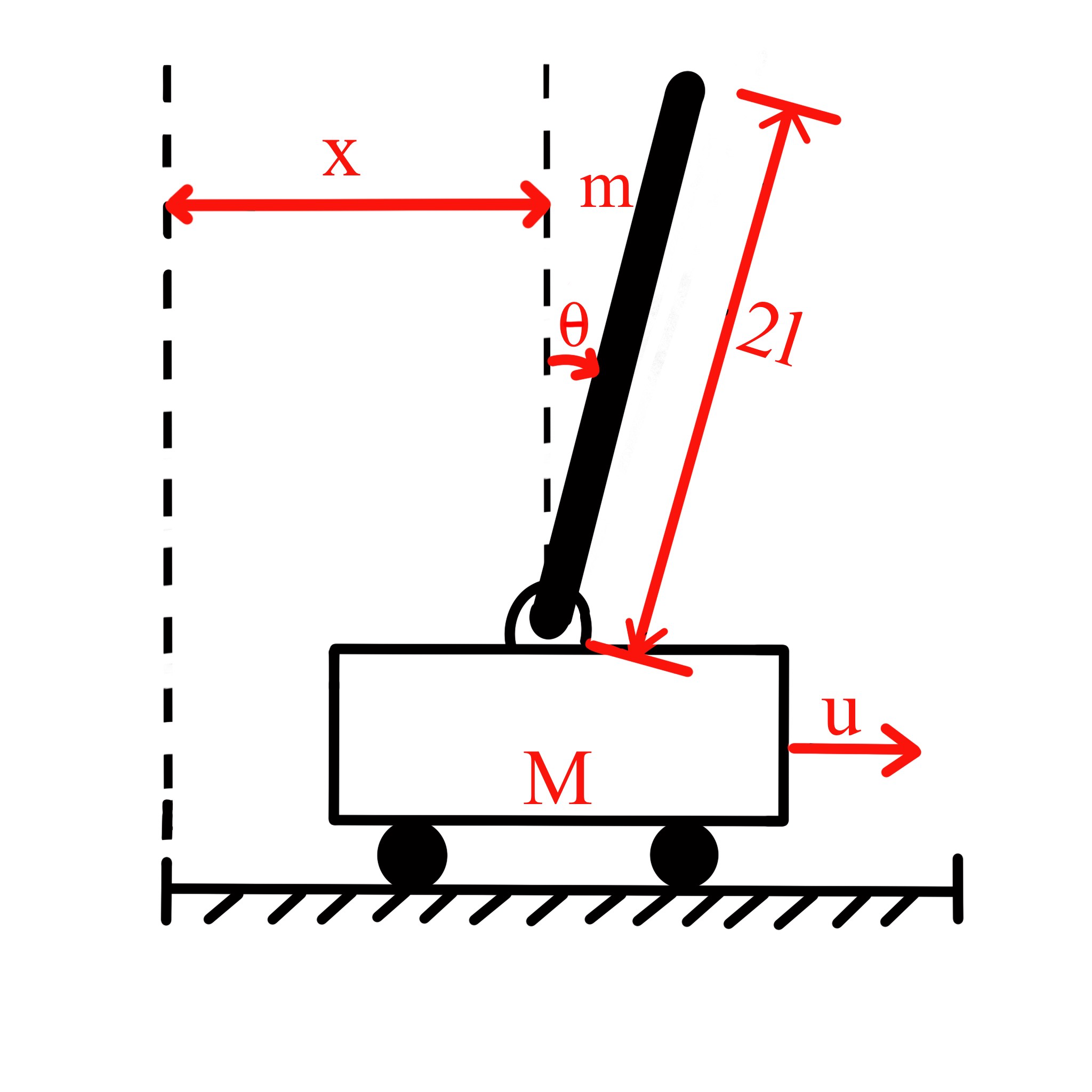}
\caption{Inverted pendulum on a cart setup.}
\label{fig:inverted pendulum}
\end{figure}

\subsubsection{Numerical simulation}
In the numerical simulation, we consider the following data:
\( m_0 = 0.2\, \mathrm{kg}, \; M = 3\, \mathrm{kg}, \; l = 1.5\, \mathrm{m}, \; L = 0.5\,\mathrm{m}.\) The initial position of the cart is at the origin and the track length is up to \(0.5\,\mathrm{m}\) on either side of the origin. The initial position of the states is \(\st(0) = (0, 0.035, 0, 0)\transpose\) where the angle made by the pendulum with the vertical is two degrees to the right. The system is simulated for \(\horizon = 10\) sec. In this experiment, the number of basis functions is considered to be \(N = 31\) and the bounds on the states are \(\varepsilon_1 = 0.5\), \(\varepsilon_2 = 0.07\), \(\varepsilon_3 = 0.5\) and \(\varepsilon_4 = 0.1\). We solve the ensuing optimization problem using Algorithm \ref{alg:sabaro}, employing the simulated annealing routine for solving the global optimization, and the CPU time with \(31\) basis functions was \(54.01169\) secs for \(900\) iterations of the global optimization; see \ref{tab:opt_data_IP} for more details. This is a practical number given the length of time horizon \(\horizon = 10\) secs. The state trajectories are depicted in Fig. \ref{fig:IP_pos_vel}  and Fig. \ref{fig:IP_ang_angvel}; we can see that all the states reach the equilibrium position and stay within the specified bounds. 
\par In Fig. \ref{fig:IP_turnpike}, we observe hints of the turnpike property in the near-optimal \(\st_2(\cdot)\) trajectory which is the angle made by the inverted pendulum with the vertical axis from an initial state as it reaches the steady state and departs close to the bound at the end of the time interval \(\lcrc{0}{10}\). This observation points strongly to the justification of employing MPC in continuous-time.




\begin{figure}[!ht]
\centering
  \includegraphics[width=\linewidth]{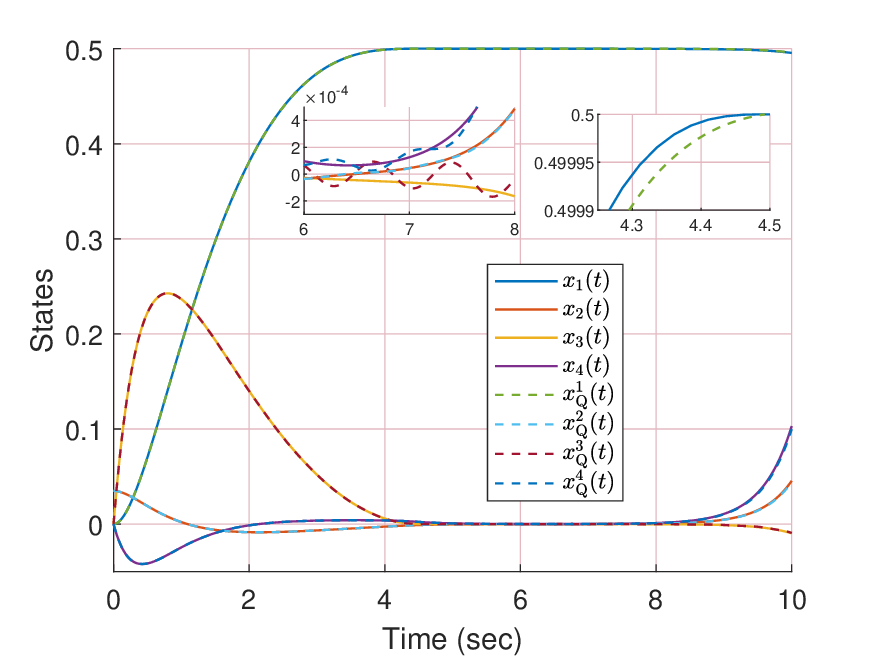}
 \caption{State trajectories \(\st_i(\cdot)\) and \(\st_{\mathrm{Q}}^i(\cdot)\) for \(i=1,\ldots,4\) when computed using our algorithm with \(31\) basis functions and when computed with the direct transcription software \(\quito\). It can be seen that the path constraints on the states are respected and the trajectories generated by our algorithm and \(\quito\) matches very closely.}
 \label{fig:IP_pos_vel}
\end{figure}
\begin{figure}[!ht]
  \centering
  \includegraphics[width=\linewidth]{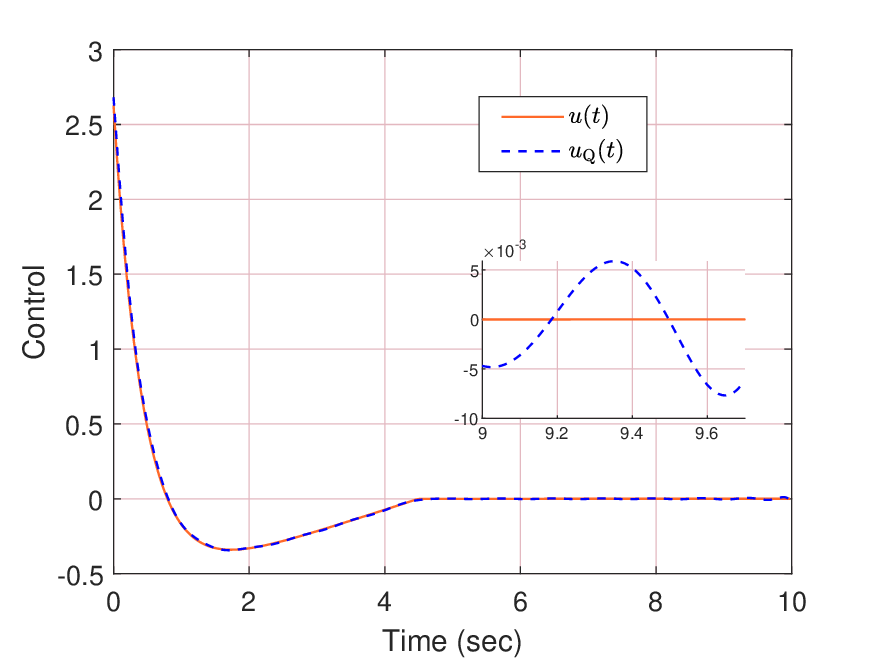}
 \caption{Control trajecories \(u(\cdot)\) and \(\cont_{\mathrm{Q}}(\cdot)\) generated by our algorithm and the direct transcription software \(\quito\), respectively. It can be seen that the control action generated by our algorithm matches closely with the one generated by \(\quito\).
 }
 \label{fig:IP_ang_angvel}
\end{figure}

\begin{figure}[ht]
\includegraphics[width=9cm,height=6cm]{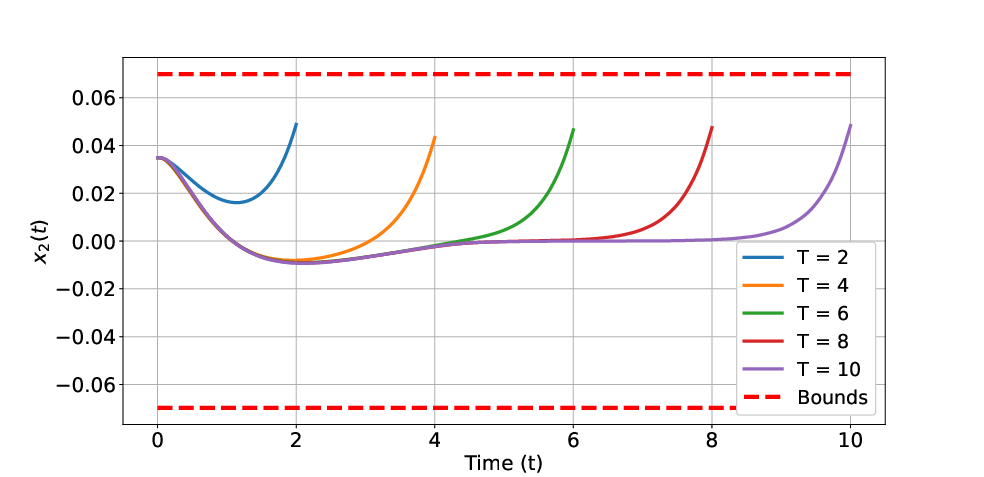}
    \caption{Time evolution of the angle made by the pendulum with the vertical for different time horizons with \(31\) basis functions in \eqref{e:pcontrol}. For each \(T\), the angle reaches equilibrium and tends towards the bound at the end of the time horizon.}
    \label{fig:IP_turnpike}
\end{figure}

\begin{table}[htbp]
\caption{A comparision of optimization statistics}
\begin{tblr}{l|c|c}
	\hline[2pt]
		\SetRow{azure9}\SetCell[c=4]{c} Solver specifications and computation time
& 3-2
& 3-3
& 3-4
\\
\hline[1pt]
Method & Solver specs & Computation time (secs) \\ 
 \hline
Ours & Dual annealing & 54.01169 \\ \hline
\(\quito\) & quadprog & 15 (with Gaussian kernel, step-size \\ & &  \(h = 0.05\), \(\mathcal{D}=2\))\\
\hline[2pt]
\end{tblr}
\centering
\label{tab:opt_data_IP}
\end{table}

\section{Conclusion}

This article introduced a numerical algorithm to solve a class of continuous-time optimal control problems in a tractable manner, with a focus on tackling model predictive control problems directly in the continuous-time regime satisfying the constraints \emph{for every time}. Two numerical examples were included which shows the numerical fidelity of the algorithm. Future work involves leveraging the tools developed herein to construct an offline explicit feedback map for a wide class of finite horizon robust optimal control problems.
\bibliographystyle{plain}
\bibliography{refs}

\end{document}